\newtheorem{theorem}{Theorem}[section]
\newtheorem{corollary}[theorem]{Corollary}
\theoremstyle{remark}
\newtheorem{remark}[theorem]{Remark}
\title{Closed-form expansions for the universal edge
  elimination polynomial}
\author{K. Dohmen} 
\affil{Department of Mathematics, %
Mittweida University of Applied Sciences, Germany \\ %
Electronic address: dohmen@hs-mittweida.de}
\newcommand{\poly}[1]{\xi(#1,x,y,z)}
\begin{document}

\maketitle

\begin{abstract}
\emph{Abstract.} We establish closed-form expansions for the universal edge
elimination polynomial of paths and cycles and their generating
functions. This includes closed-form expansions for the bivariate matching
polynomial, the bivariate chromatic polynomial, and the covered components
polynomial.
\par\medskip
\emph{Keywords.}  edge elimination polynomial, bivariate matching polynomial,
bivariate chromatic polynomial, covered components polynomial, generating
function, path, cycle, closed-form
\par\medskip
\emph{Mathematics Subject Classification (2010).} 05C30, 05C31
\end{abstract}

\section{Introduction}

As a generalization of several well-known graph polynomials, Averbouch, Godlin
and Makowsky \cite{AGM:2010} introduced the so-called \emph{universal edge
  elimination polynomial} $\poly{G}$, whose recursive definition involves
three kinds of edge elimination:
\begin{description}
\item[\qquad $G_{-e}$:] The graph obtained from $G$ by removing the edge $e$.
\item[\qquad $G_{/e}$:] The graph obtained from $G$ by removing $e$ and
identifying its endpoints,
\item[\qquad $G_{\dagger e}$:] The graph obtained from $G$ by removing $e$ and
all incident vertices.
\end{description}                       
All graphs are considered as finite and undirected, and may have loops and
multiple edges.  We use $P_n$ to denote the simple path with $n$ vertices
($n=0,1,\dots)$, and $\oplus$ to denote the disjoint union of graphs.
According to \cite{AGM:2010}, $\poly{G}$ is defined by
\begin{gather}
\poly{P_0} = 1, \quad \poly{P_1} = x, \label{eq:14} \\
\poly{G} = \poly{G_{-e}} + y \poly{G_{/e}} + z \poly{G_{\dagger e}} , \label{eq:rek} \\
\poly{G_1 \oplus G_2} = \poly{G_1} \poly{G_2} . \label{eq:rekk}
\end{gather}
The universal edge elimination polynomial $\poly{G}$ generalizes, among
others, the bivariate matching polynomial $M(G,x,y) = \xi(G,x,0,y)$ (provided
$G$ is loop-free), the bivariate chromatic polynomial
$P(G,x,y)=\xi(G,x,-1,x-y)$, and the covered components polynomial $C(G,x,y,z)=
\xi(G,x,y,xyz-xy)$.  The implications of our results on $\poly{G}$ for these
polynomials are new as well.  We refer to
\cite{AGM:2010,DPT:2003,GH:1983,Tri:2011} for the definitions of the various
graph polynomials and the relationships among them.

\section{Closed-form expansions for paths and cycles}
\label{sec:clos-form-expans}

We use $\mathbb{N}$ to denote the set of positive integers. The following
theorem provides a closed-form expansion for the universal edge elimination
polynomial of a path.

\begin{theorem}
\label{thm1}
Let $n\in\mathbb{N}$, and $x,y,z\in \mathbb{R}$.  If $z > - \left(
\frac{x+y}{2} \right)^2$, then
\begin{gather}
\poly{P_n} = \frac{\sqrt{D}-x+y}{2\sqrt{D}} \left( \frac{x+y-\sqrt{D}}{2}
\right)^n + \frac{\sqrt{D}+x-y}{2\sqrt{D}}\left( \frac{x+y+\sqrt{D}}{2}
\right)^n 
\label{eq:2} 
\end{gather}
where 
\begin{gather}
\label{eq:3}
D := x^2+2xy+y^2+4z.
\end{gather}
If $z < - \left( \frac{x+y}{2} \right)^2$, then
\begin{gather}
\poly{P_n} = 
(-z)^{n/2} \left( \cos(n\varphi) + \frac{x-y}{\sqrt{-D}} \sin(n\varphi)
\right) 
\label{eq:15}
\end{gather}
where 
\begin{gather}
\label{eq:17}
\varphi = \begin{cases} \arctan \frac{\sqrt{-D}}{x+y} & \text{if $x+y > 0$}, \\
\pi/2 & \text{if $x+y=0$}, \\
\pi+\arctan \frac{\sqrt{-D}}{x+y} & \text{if $x+y < 0$}.
\end{cases}
\end{gather}
If $z = - \left( \frac{x+y}{2} \right)^2$, then
\begin{gather}
\label{eq:8}
\poly{P_n} = \frac{(n+1)x - (n-1)y}{2}\left( \frac{x+y}{2} \right)^{n-1}.
\end{gather}
\end{theorem}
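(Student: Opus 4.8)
The plan is to reduce the problem to a homogeneous second-order linear recurrence in $n$ and then solve it by the standard characteristic-root method, treating the three regimes of the discriminant separately.

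First I would derive the recurrence. Writing $p_n := \poly{P_n}$ and taking $e$ to be a terminal edge of $P_n$ for $n \geq 2$, the three elimination operations give $P_{n,-e} = P_1 \oplus P_{n-1}$, $P_{n,/e} = P_{n-1}$, and $P_{n,\dagger e} = P_{n-2}$. Applying the deletion rule (\ref{eq:rek}) together with multiplicativity (\ref{eq:rekk}) and the base values (\ref{eq:14}) yields
\begin{gather}
p_n = x\,p_{n-1} + y\,p_{n-1} + z\,p_{n-2} = (x+y)\,p_{n-1} + z\,p_{n-2} \qquad (n \geq 2),
\end{gather}
with $p_0 = 1$ and $p_1 = x$. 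The associated characteristic polynomial is $t^2 - (x+y)t - z$, whose roots $t_\pm = \tfrac{(x+y)\pm\sqrt{D}}{2}$ are controlled by the discriminant $D = (x+y)^2 + 4z$ of (\ref{eq:3}). Since the sign of $D$ coincides with the sign of $z + \left(\frac{x+y}{2}\right)^2$, the hypothesis splits exactly into the three stated cases.

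When $z > -\left(\frac{x+y}{2}\right)^2$ the roots are distinct and real, so I would set $p_n = A\,t_+^n + B\,t_-^n$ and solve the $2\times2$ system coming from $p_0=1$, $p_1=x$, obtaining $A = \frac{\sqrt{D}+x-y}{2\sqrt{D}}$ and $B = \frac{\sqrt{D}-x+y}{2\sqrt{D}}$; substituting these into the general solution reproduces (\ref{eq:2}) verbatim. The boundary case $z = -\left(\frac{x+y}{2}\right)^2$ forces a double root $t = \frac{x+y}{2}$, so the general solution takes the form $(A+Bn)t^n$; matching initial conditions gives $A=1$ and $B = \frac{x-y}{x+y}$, and a short rearrangement of $\bigl(1 + \frac{x-y}{x+y}n\bigr)\bigl(\frac{x+y}{2}\bigr)^n$ collapses to (\ref{eq:8}). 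The degenerate sub-case $x+y=0$ (hence $z=0$) must be checked separately, but (\ref{eq:8}) remains correct there under the convention $0^0=1$.

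The case $z < -\left(\frac{x+y}{2}\right)^2$ is where the real work lies, and I expect it to be the main obstacle. Here $D<0$, the roots are complex conjugates, and I would pass to polar form: a short computation gives $|t_\pm|^2 = \frac{(x+y)^2 - D}{4} = -z$, so $|t_\pm| = \sqrt{-z}$ and $t_\pm = \sqrt{-z}\,(\cos\varphi \pm i\sin\varphi)$ with $\cos\varphi = \frac{x+y}{2\sqrt{-z}}$ and $\sin\varphi = \frac{\sqrt{-D}}{2\sqrt{-z}}$. The delicate point is pinning down $\varphi$ as an explicit arctangent: because $\sin\varphi > 0$ throughout, the correct branch of the argument is fixed entirely by the sign of $\cos\varphi$, i.e.\ of $x+y$, which is precisely what produces the three-way definition (\ref{eq:17}). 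With $\varphi$ determined, De Moivre's formula gives $t_\pm^n = (-z)^{n/2}(\cos n\varphi \pm i \sin n\varphi)$; the coefficients $A,B$ from the real case remain valid but are now complex conjugates, so $p_n = 2\,\mathrm{Re}(A\,t_+^n)$. Expanding this real part and simplifying then delivers (\ref{eq:15}).
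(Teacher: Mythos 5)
Your proposal is correct and follows essentially the same route as the paper: the same end-edge recurrence $p_n=(x+y)p_{n-1}+zp_{n-2}$, the same characteristic-root analysis in the three discriminant regimes, the same coefficients in the distinct-root and double-root cases, and the same polar-form/De Moivre treatment (the paper expands $c_1r_1^n+c_2r_2^n$ and cancels imaginary parts, which is your $2\,\mathrm{Re}(At_+^n)$). Your explicit handling of the $x+y=0$ sub-case via the $0^0=1$ convention is in fact slightly more careful than the paper's.
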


\begin{proof}
By choosing $e$ as an end edge of $P_n$, Eqs.~\eqref{eq:rek} and
\eqref{eq:rekk} yield the recurrence
\begin{gather}
\label{eq:4}
\poly{P_n} = (x+y)\poly{P_{n-1}} + z \poly{P_{n-2}} \quad (n\ge 2),
\end{gather}
where the initial conditions are given by Eq.~\eqref{eq:14}.  This is a
homogeneous linear recurrence of degree 2 with constant coefficients.  We
solve this recurrence by applying the method of characteristic roots.  The
characteristic equation of the recurrence is
\begin{gather}
\label{eq:16}
r^2-(x+y)r-z=0 , 
\end{gather}
with discriminant $D$, given by Eq.~\eqref{eq:3}.  In our three cases, we have
$D>0$, $D<0$, and $D=0$, respectively.  In the first two cases, the solution
to Eq.~\eqref{eq:4} is of the form
\begin{gather}
\label{eq:5}
\poly{P_n} = c_1 r_1^n + c_2 r_2^n 
\end{gather}
where $r_1,r_2$ are the distinct roots of Eq.~\eqref{eq:16} and $c_1,c_2$ are
chosen to satisfy Eq.~\eqref{eq:14}.  In the first case we have
\begin{align}
\label{eq:23}
\begin{aligned}
r_1 & = \frac{x+y - \sqrt{D}}{2}, \qquad\quad\,\,\,\, & c_1 & = \frac{\sqrt{D}- x +
  y}{2\sqrt{D}}, \\
r_2 & = \frac{x+y + \sqrt{D}}{2},  & c_2 & = \frac{\sqrt{D}+ x -
  y}{2\sqrt{D}}, 
\end{aligned}
\intertext{and in the second case,}
\label{eq:27}
\begin{aligned}
r_1 & = \frac{x+y}{2} - \frac{\sqrt{-D}}{2} \,i, \qquad & c_1 & = \frac12 +
\frac{x-y}{2\sqrt{-D}} \,i, \\
r_2 & = \frac{x+y}{2} + \frac{\sqrt{-D}}{2} \,i, &  c_2 & = \frac12 -
\frac{x-y}{2\sqrt{-D}} \, i. 
\end{aligned}
\end{align}
A little bit of extra work is needed in the second case in order to get rid of
the imaginary parts: Representing $r_1$ and $r_2$ in polar form and applying
Euler's formula we obtain
\begin{align}
\label{eq:25}
\begin{aligned}
r_1^n & = \left( \sqrt{-z} \,e^{-i\varphi} \right)^n \!\!\!\! & = (-z)^{n/2} \left(
\cos(n\varphi) -
\sin(n\varphi) i \right), \\
r_2^n & = \left( \sqrt{-z} \,e^{i\varphi} \right)^n & = (-z)^{n/2} \left(
\cos(n\varphi) + \sin(n\varphi) i \right),
\end{aligned}
\end{align} 
with $\varphi$ as in Eq.~\eqref{eq:17}.  Thus, Eq.~\eqref{eq:5} becomes
\begin{multline*}
\poly{P_n} = (-z)^{n/2} \left( \frac12 \cos(n\varphi) + \frac{x-y}{2\sqrt{-D}}
\cos(n\varphi) i  
- \frac12 \sin(n\varphi) i + \frac{x-y}{2\sqrt{-D}}\sin(n\varphi) \right. \\
\left. + \frac12 \cos(n\varphi) - \frac{x-y}{2\sqrt{-D}} \cos(n\varphi) i 
+ \frac12 \sin(n\varphi) i + \frac{x-y}{2\sqrt{-D}} \sin(n\varphi) \right).
\end{multline*}
This shows that the imaginary parts cancel out. This proves Eq.~\eqref{eq:15}.
\par In the third case, the solution to Eq.~\eqref{eq:4} is $\poly{P_n} = (c_1
+ c_2 n ) r^n$ where $r = \frac{x+y}{2}$ is the unique root of
Eq.~\eqref{eq:16} and $c_1,c_2\in\mathbb{R}$ are determined by
Eq.~\eqref{eq:14}.  If $x+y=0$, then $\poly{P_n}=0$. Thus, in this case,
Eq.~\eqref{eq:8} holds.  If $x+y\neq 0$, then by Eq.~\eqref{eq:14}, $c_1 = 1$
and $c_2 = \frac{x-y}{x+y}$; hence,
\[ \poly{P_n} = \left( 1 + \frac{x-y}{x+y} n \right) \left( \frac{x+y}{2}
\right)^n, \] which coincides with Eq.~\eqref{eq:8}.  This completes the
proof.
\end{proof}

For any $n\in\mathbb{N}$, we use $C_n$ to denote the connected 2-regular graph
with $n$ vertices. We adopt the convention that $C_0$ is the empty graph.  By
Eq.~\eqref{eq:rek} we have
\begin{align}
\poly{C_1} & = x + xy + z, \label{eq:12} \\
\poly{C_2} 
& = x^2 + 2xy + 2z + xy^2 + yz. \label{eq:7}
\end{align}
The following theorem generalizes Eqs.~\eqref{eq:12} and \eqref{eq:7} to
cycles of any finite length.

\begin{theorem}
\label{thm2}
Let $n\in\mathbb{N}$ and $x,y,z\in\mathbb{R}$.  Let $D$ and $\varphi$ be
defined as in Eq.~\eqref{eq:3} resp.\ \eqref{eq:17}.  If $z \ge - \left(
\frac{x+y}{2} \right)^2$, then
\begin{gather}
\label{eq:9}
\poly{C_n} =  {\left(\frac{x + y - \sqrt{D}}{2}\right)}^{n}
+ {\left(\frac{x + y + \sqrt{D}}{2}\right)}^{n} +y^{n-1}(xy-y+z).
\end{gather}
If $z \le - \left( \frac{x+y}{2} \right)^2$, then
\begin{gather}
\label{eq:19}
\poly{C_n} = 2(-z)^{n/2} \cos(n\varphi) + y^{n-1} (xy-y+z).
\end{gather}
\end{theorem}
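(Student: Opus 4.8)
The plan is to collapse the cycle to paths by a single edge elimination and then to propagate the path formula of Theorem~\ref{thm1} through an induction on $n$. First I would fix $n\ge 2$ and take $e$ to be any edge of $C_n$. Tracing the three eliminations in \eqref{eq:rek}: deleting $e$ opens the cycle into the path $P_n$; contracting $e$ merges two adjacent vertices and yields $C_{n-1}$; and deleting $e$ together with its two endpoints leaves $P_{n-2}$. Combined with \eqref{eq:rekk}, this gives the mixed recurrence
\begin{gather}
\poly{C_n} = \poly{P_n} + y\,\poly{C_{n-1}} + z\,\poly{P_{n-2}} \qquad (n\ge 2).
\label{eq:cycrec}
\end{gather}
The small cases need a separate look: for $n=2$ the contraction of one of the two parallel edges turns the remaining edge into a loop, so $(C_2)_{/e}=C_1$ and $(C_2)_{\dagger e}=P_0$; checking \eqref{eq:12} and \eqref{eq:7} against \eqref{eq:cycrec} confirms the identifications.

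Next I would prove, as a polynomial identity in $x,y,z$, the single formula
\[
\poly{C_n} = r_1^n + r_2^n + y^{n-1}(xy-y+z),
\]
where $r_1,r_2$ are the characteristic roots of \eqref{eq:16}. Because the power sum $r_1^n+r_2^n$ is symmetric in the roots, it is a polynomial in $r_1+r_2=x+y$ and $r_1r_2=-z$; hence it suffices to establish this identity on the open region $D>0$, where the roots are distinct and real and \eqref{eq:5} applies, and then to conclude on all of $\mathbb{R}^3$ from the fact that two polynomials agreeing on a nonempty open set coincide. The induction on $n$ then runs as follows. The base case $n=1$ is immediate, since $r_1+r_2+(xy-y+z)=x+xy+z$ matches \eqref{eq:12}. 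In the inductive step I substitute the induction hypothesis for $\poly{C_{n-1}}$ into \eqref{eq:cycrec}; the target correction term $y^{n-1}(xy-y+z)$ and the contribution $y\cdot y^{n-2}(xy-y+z)$ coming from $y\poly{C_{n-1}}$ cancel, and using that the power sum satisfies the same recurrence \eqref{eq:4} the claim collapses to the single identity
\begin{gather}
\poly{P_n} + z\,\poly{P_{n-2}} = x\bigl(r_1^{n-1}+r_2^{n-1}\bigr) + z\bigl(r_1^{n-2}+r_2^{n-2}\bigr).
\label{eq:crux}
\end{gather}

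The crux is \eqref{eq:crux}, and this is where I expect the only real work. Writing $\poly{P_n}=c_1r_1^n+c_2r_2^n$ as in \eqref{eq:5} and using Vieta's relation $z=-r_1r_2$, both sides factor through $r_1^{n-2}$ and $r_2^{n-2}$, so \eqref{eq:crux} reduces to the two coefficient identities $c_1(r_1-r_2)=x-r_2$ and $c_2(r_1-r_2)=r_1-x$. Rather than inserting the explicit values \eqref{eq:23}, I would derive these from the initial conditions \eqref{eq:14} alone: these force $c_1+c_2=1$ and $c_1r_1+c_2r_2=x$, whence $c_1(r_1-r_2)=(c_1r_1+c_2r_2)-r_2(c_1+c_2)=x-r_2$, and symmetrically for $c_2$. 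This closes the induction and establishes the displayed identity for $\poly{C_n}$.

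Finally I would read off the two stated forms. For $z\ge -\bigl(\tfrac{x+y}{2}\bigr)^2$, that is $D\ge 0$, inserting the real roots \eqref{eq:23} gives \eqref{eq:9}; note that the power-sum form $r_1^n+r_2^n$ remains valid at the boundary $D=0$, where it equals $2\bigl(\tfrac{x+y}{2}\bigr)^n$, so no separate degenerate case arises as it did for paths. For $z\le -\bigl(\tfrac{x+y}{2}\bigr)^2$, that is $D\le 0$, the polar representation \eqref{eq:25} yields $r_1^n+r_2^n=2(-z)^{n/2}\cos(n\varphi)$ directly, and substituting this into the identity gives \eqref{eq:19}; the two expressions agree on the common boundary $D=0$.
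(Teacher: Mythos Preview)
Your argument is correct, and it takes a genuinely different route from the paper after the common starting point \eqref{eq:cycrec}. The paper \emph{iterates} \eqref{eq:cycrec} down to $n=2$, obtaining an expression of the form
\[
\poly{C_n}=\poly{P_n}+y\,\poly{P_{n-1}}+(y^2+z)\sum_{j=0}^{n-4} y^{j}\poly{P_{n-j-2}}+\text{(explicit tail)},
\]
then substitutes $\poly{P_k}=c_1r_1^k+c_2r_2^k$ and sums the geometric series in $y/r_i$. This forces a separate treatment of the degenerate point $z=-xy$ (where $y$ coincides with a root and the geometric series collapses) and of the boundary $D=0$, both handled by continuity. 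Your approach replaces that unrolling by a one–step induction: you isolate the single identity \eqref{eq:crux} and prove it directly from the initial conditions $c_1+c_2=1$, $c_1r_1+c_2r_2=x$ together with Vieta's relation $r_1r_2=-z$, never touching a geometric sum. Framing the target $r_1^n+r_2^n$ as a polynomial in $x+y$ and $-z$ and invoking the identity theorem for polynomials then removes the need for any limiting argument at $D=0$ or $z=-xy$.

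What each buys: the paper's computation is more explicit and yields the summed form directly (useful if one also wants the generating function derivation), but at the cost of case distinctions. Your argument is shorter and structurally cleaner; in particular, deriving $c_1(r_1-r_2)=x-r_2$ and $c_2(r_1-r_2)=r_1-x$ purely from \eqref{eq:14} is a nice device that sidesteps inserting the closed forms \eqref{eq:23} and the algebraic simplification the paper defers to ``rearranging and cancelling terms''. One small presentational point: your phrase ``both sides factor through $r_1^{n-2}$ and $r_2^{n-2}$'' is slightly imprecise---after substituting $z=-r_1r_2$ each side is a combination of $r_1^{n-1}$ and $r_2^{n-1}$---but the logic (prove the two coefficient identities and deduce \eqref{eq:crux}) is sound as written.
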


\begin{proof}
For \mbox{$n=1, 2$} the theorem agrees under both conditions on $z$ with
Eqs.~\eqref{eq:12} and \eqref{eq:7}.  This is easy to see for $z \ge - \left(
\frac{x+y}{2} \right)^2$, while for $z \le - \left( \frac{x+y}{2} \right)^2$ the
identities $\cos(\arctan(t))$ $= 1/\sqrt{1+t^2}$ and $\cos(\alpha) =
2\cos^2(\alpha)-1$ reveal the coincidence.
\par
For the rest of this proof, we assume $n\ge 3$.  We may further assume that $z
\neq - \left( \frac{x+y}{2} \right)^2$ as the remaining case follows for
reasons of continuity by taking limits on both sides of Eqs.~\eqref{eq:9} and
\eqref{eq:19} as $z \downarrow - \left( \frac{x+y}{2} \right)^2$ resp.\
$z\uparrow - \left( \frac{x+y}{2} \right)^2$. 
By Eq.~\eqref{eq:rek} we have the non-homogeneous recurrence
\begin{gather*}
\poly{C_n} = \poly{P_n} + y\poly{C_{n-1}} + z \poly{P_{n-2}} \quad (n\ge 3)
\end{gather*}
with initial condition as in Eq.~\eqref{eq:7}.  Iterating this recurrence
gives
\begin{align}
\poly{C_n} & = \sum_{j=0}^{n-2} y^j \Big( \poly{P_{n-j}} + z \poly{P_{n-j-2}}
\Big) + y^{n-1} (x+xy+z)
\notag \\
& =  \poly{P_n} + y \poly{P_{n-1}} +  (y^2+z) \sum_{j=0}^{n-4} y^j
\poly{P_{n-j-2}} \notag \\
& \qquad\qquad\qquad\qquad\qquad\qquad +  y^{n-3}\left( xz + yz + xy^2 + xy^3 +y^2 z\right)  .
\label{eq:11}
\end{align}
Using Eq.~\eqref{eq:5} with $c_1,r_1,c_2,r_2$ from Eqs.~\eqref{eq:23} and
\eqref{eq:27} in the preceding proof, the sum on the right-hand side of
Eq.~\eqref{eq:11} can be written as
\begin{align*}
\sum_{j=0}^{n-4} y^j \poly{P_{n-j-2}} 
& = \sum_{j=0}^{n-4} y^j \left( c_1 r_1^{n-j-2} + c_2 r_2^{n-j-2} \right)
\notag \\
& = c_1 r_1^{n-2} \sum_{j=0}^{n-4} \left( \frac{y}{r_1} \right)^j + c_2
r_2^{n-2} \sum_{j=0}^{n-4} \left( \frac{y}{r_2} \right)^j \,  .
\end{align*}
\par
If $z\neq -xy$, then $y\neq r_1$ and $y\neq r_2$.  In this case, by applying
the formula for finite geometric series the preceding equation simplifies to
\begin{align*}
\sum_{j=0}^{n-4} y^j \poly{P_{n-j-2}} & = c_1 r_1^2
\frac{r_1^{n-3}-y^{n-3}}{r_1-y} + c_2 r_2^2 \frac{r_2^{n-3}-y^{n-3}}{r_2-y} .
\end{align*}
Substituting this latter expression into Eq.~\eqref{eq:11} and taking into
account that $r_1$ and $r_2$ are given as in Eqs.~\eqref{eq:23} and
\eqref{eq:27} leads to
\begin{align}
\poly{C_n} & = c_1 r_1^n +c_2 r_2^n + y(c_1 r_1^{n-1} + c_2 r_2^{n-1}) \notag \\ 
& \qquad +\,
(y^2+z)\left( c_1 r_1^2 \frac{r_1^{n-3}-y^{n-3}}{r_1-y} + c_2 r_2^2
\frac{r_2^{n-3}-y^{n-3}}{r_2-y} \right) \notag \\
& \qquad +\, 
y^{n-3}\left( xz +yz +xy^2 +xy^3 +y^2z \right) \notag \\
& = r_1^n + r_2^n +y^{n-1} (xy - y + z) \, ,  \label{sec:clos-form-expans-1}
\end{align}
where the last equality follows by substituting $c_1 = -
\frac{r_1-y}{\sqrt{D}}$, $c_2 = \frac{r_2-y}{\sqrt{D}}$, $\sqrt{D} =
-\frac{r_1^2-r_2^2}{x+y}$, and rearranging and cancelling terms (note that
$\sqrt{D} = i\sqrt{-D}$ if $D<0$).  Now, for $z>-\left(\frac{x+y}{2}\right)^2$
Eq.~\eqref{eq:9} follows from Eqs.~\eqref{sec:clos-form-expans-1} and
\eqref{eq:23}, whereas for $z<-\left(\frac{x+y}{2}\right)^2$ Eq.~\eqref{eq:19}
follows from Eqs.~\eqref{sec:clos-form-expans-1} and \eqref{eq:25} after
cancelling out the imaginary parts, in analogy to the proof of
Theorem~\ref{thm1}. \par
If $z=-xy$, then $z>-\left( \frac{x+y}{2} \right)^2$.  In this remaining case,
the result follows for reasons of continuity by taking limits on both sides of
Eq.~\eqref{eq:9} as $z\downarrow -xy$.
\end{proof}

\begin{remark}
For $z = - \left( \frac{x+y}{2} \right)^2$, Eqs.~\eqref{eq:9} and \eqref{eq:19}
coincide. In this case,
\[ \poly{C_n} = 2 \left( \frac{x+y}{2} \right)^n - \frac{x^2-2xy+y^2+4y}{4} \,
y^{n-1} . \] Alternatively, this can be shown by combining Eqs.~\eqref{eq:8}
and \eqref{eq:11} and applying the formula for finite geometric series.
\end{remark}

\begin{remark}
The preceding closed-form expansions can also be proved by induction.
A~computer algebra system might be helpful. In \textsf{Sage} \cite{Stein}, for
instance, the following lines of code prove Eqs.~\eqref{eq:2} and \eqref{eq:9}
by induction on the number of vertices.
\begin{verbatim}
var("n x y z")
D = x^2+2*x*y+y^2+4*z
path = (sqrt(D)-x+y)/(2*sqrt(D))*((x+y-sqrt(D))/2)^n \
      +(sqrt(D)+x-y)/(2*sqrt(D))*((x+y+sqrt(D))/2)^n
cycle = ((x+y-sqrt(D))/2)^n+((x+y+sqrt(D))/2)^n+y^(n-1)*(x*y-y+z)
bool(path(n=0)==1 and path(n=1)==x \
      and (x+y)*path(n=n-1)+z*path(n=n-2)==path)
bool(cycle(n=1)==x+x*y+z and path+y*cycle(n=n-1)+z*path(n=n-2)==cycle)
\end{verbatim}
\end{remark}

We proceed with a corollary on the generating function of $\poly{G}$.

\begin{corollary}
\label{cor:clos-form-expans-3}
\begin{align*}
\sum_{n=0}^\infty \poly{P_n} t^n & = \frac{1-yt}{1-(x+y)t -zt^2} \, ,\\
\sum_{n=0}^\infty \poly{C_n} t^n & = \frac{1+zt^2}{1-(x+y)t-zt^2} +
\frac{(xy-y+z)t}{1-yt} \, .
\end{align*}
\end{corollary}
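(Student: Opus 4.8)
The plan is to treat both identities as identities of formal power series in $t$ whose coefficients lie in $\mathbb{R}[x,y,z]$; this is legitimate because the recursive definition \eqref{eq:14}--\eqref{eq:rekk} makes every $\poly{P_n}$ and every $\poly{C_n}$ a polynomial in $x$, $y$, $z$. Consequently I need not worry about convergence, nor about the case distinctions on $z$ appearing in Theorems~\ref{thm1} and~\ref{thm2}: it suffices to verify each identity coefficient by coefficient, and any closed form that holds on an open region of $(x,y,z)$-space holds identically.

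For the path I would work directly from the recurrence \eqref{eq:4} together with the initial conditions \eqref{eq:14}. Writing $F(t)=\sum_{n\ge0}\poly{P_n}t^n$, I multiply \eqref{eq:4} by $t^n$ and sum over $n\ge2$. The left-hand side becomes $F(t)-1-xt$, while reindexing the two sums on the right identifies them as $(x+y)t\bigl(F(t)-1\bigr)$ and $zt^2F(t)$ respectively. Collecting the $F(t)$ terms yields a single linear equation, namely $F(t)\bigl(1-(x+y)t-zt^2\bigr)=1-yt$, the right-hand side arising because the constant and linear contributions combine as $1+xt-(x+y)t=1-yt$. Solving for $F(t)$ gives the first formula. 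This step is entirely routine.

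For the cycle I would instead invoke the closed form obtained inside the proof of Theorem~\ref{thm2}, namely \eqref{sec:clos-form-expans-1}, which reads $\poly{C_n}=r_1^n+r_2^n+y^{n-1}(xy-y+z)$, where $r_1,r_2$ are the roots of the characteristic equation \eqref{eq:16}. Since both sides are polynomials in $x,y,z$ agreeing on the open region $z>-\left(\frac{x+y}{2}\right)^2$, they agree identically, so I may use this as a polynomial identity for every $n\ge1$. Setting $C(t)=\sum_{n\ge0}\poly{C_n}t^n$ and splitting off the term $\poly{C_0}=1$, I would sum three geometric series: $\sum_{n\ge1}r_i^nt^n=r_it/(1-r_it)$ for $i=1,2$, and $(xy-y+z)\sum_{n\ge1}y^{n-1}t^n=(xy-y+z)t/(1-yt)$. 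The last already matches the second summand of the claimed formula. For the first two, Vieta's formulas for \eqref{eq:16} give $r_1+r_2=x+y$ and $r_1r_2=-z$, so that $(1-r_1t)(1-r_2t)=1-(x+y)t-zt^2$; combining the two fractions over this common denominator produces $\bigl((x+y)t+2zt^2\bigr)/\bigl(1-(x+y)t-zt^2\bigr)$.

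The one point requiring care---and the main obstacle, though a mild one---is the bookkeeping at $n=0$. The closed form for $\poly{C_n}$ fails at $n=0$ (it would force a spurious $y^{-1}$), so the term $\poly{C_0}=1$ must be added back by hand. The final step is then to check that $1+\bigl((x+y)t+2zt^2\bigr)/\bigl(1-(x+y)t-zt^2\bigr)$ simplifies to $(1+zt^2)/\bigl(1-(x+y)t-zt^2\bigr)$, which is immediate once the numerators are combined; adding the $(xy-y+z)t/(1-yt)$ contribution then completes the derivation. As an alternative to invoking Theorem~\ref{thm2}, one could instead push the non-homogeneous recurrence $\poly{C_n}=\poly{P_n}+y\poly{C_{n-1}}+z\poly{P_{n-2}}$ through the same generating-function manipulation as in the path case, expressing $C(t)$ in terms of $F(t)$ and substituting the already-derived $F(t)$; this route is self-contained but carries heavier boundary terms from the small values $n=0,1,2$.
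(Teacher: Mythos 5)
Your proof is correct. For the cycle you do essentially what the paper does: its one-line proof declares the corollary ``an immediate consequence of Theorem \ref{thm1}, Theorem \ref{thm2} and the geometric series formula'', i.e.\ one sums the three geometric series arising from the closed form $\poly{C_n}=r_1^n+r_2^n+y^{n-1}(xy-y+z)$ and uses $r_1+r_2=x+y$, $r_1r_2=-z$ to recombine the denominators --- exactly your computation, including the hand adjustment for $\poly{C_0}=1$, which the paper leaves implicit. For the path, however, you take a genuinely different route: rather than summing $\sum_n(c_1r_1^n+c_2r_2^n)t^n=\frac{c_1}{1-r_1t}+\frac{c_2}{1-r_2t}$ and simplifying the numerator $c_1(1-r_2t)+c_2(1-r_1t)$ to $1-yt$ using the explicit values of $c_1,c_2$ from \eqref{eq:23}, you derive the generating function directly from the recurrence \eqref{eq:4} and the initial conditions \eqref{eq:14}. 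This is arguably cleaner: it bypasses Theorem \ref{thm1} entirely, avoids the three-way case distinction on the sign of $D$, and involves no algebra with $\sqrt{D}$; what the paper's route buys is uniformity, since it treats the path and the cycle by the same mechanism once the closed forms are in hand. Your opening observation --- that both sides are formal power series with coefficients in $\mathbb{R}[x,y,z]$, so identities verified on an open region of parameter space hold identically --- is a worthwhile addition, as it explains why the case distinctions in Theorems \ref{thm1} and \ref{thm2} (and the restriction under which \eqref{sec:clos-form-expans-1} is derived) are irrelevant to the final rational-function identities, a point the paper's one-sentence proof glosses over.
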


\begin{proof}
Corollary \ref{cor:clos-form-expans-3} is an immediate consequence of Theorem
\ref{thm1}, Theorem \ref{thm2} and the geometric series formula.
\end{proof}

\section*{Acknowledgement}

The author thanks Peter Tittmann for drawing his attention to the universal
edge elimination polynomial and its generating function.

\end{document}